\title{On the Haar Shift representations of Calder\'on-Zygmund Operators}
\author{Tuomas Orponen}\thanks{The author is supported by the Finnish Centre of Excellence in Analysis and Dynamics Research.}
\address{Department of Mathematics and Statistics, University of Helsinki, P.O.B. 68, FI-00014 Helsinki, Finland}
\email{tuomas.orponen@helsinki.fi}
\subjclass[2000]{42B20}
\newcommand{\E}{\mathbb{E}}
\newcommand{\R}{\mathbb{R}}
\newcommand{\N}{\mathbb{N}}
\newcommand{\Z}{\mathbb{Z}}
\newcommand{\tn}{\mathbb{P}}
\newcommand{\Sh}{\mathbb{S}}
\newcommand{\calD}{\mathcal{D}}
\newcommand{\1}{\mathbf{1}}
\newcommand{\supp}{\operatorname{supp}}
\newcommand{\dist}{\operatorname{dist}}
\theoremstyle{plain}
\newtheorem{thm}[equation]{Theorem}
\newtheorem{lemma}[equation]{Lemma}
\theoremstyle{definition}
\newtheorem{definition}[equation]{Definition}
\newtheorem{notation}[equation]{Notation}
\theoremstyle{remark}
\newtheorem{remark}[equation]{Remark}
\numberwithin{equation}{section}
\begin{document}

\begin{abstract} In connection with proving the $A_{2}$ conjecture in 2010, T. Hytönen obtained a representation of general Calder\'on-Zygmund operators in terms of simpler operators known as Haar shifts. In this note, we prove that the result is sharp in the sense that Haar shift representations of Hytönen's type are \emph{only} available for Calder\'on-Zygmund operators.
\end{abstract}

\maketitle

\section{Introduction}

Calder\'on-Zygmund operators are, by definition, bounded linear mappings $T \colon L^{2}(\R^{d}) \to L^{2}(\R^{d})$, which admit an integral representation
\begin{equation}\label{intRep} \langle Tf,g\rangle = \iint f(y)g(x)K(x,y) \, dx dy,  \end{equation} 
whenever $f,g \colon \R^{d} \to \R$ are bounded functions with compact and disjoint supports. The function $K \colon \R^{d} \times \R^{d} \setminus \{(x,y) : x = y\} \to \R$ is required to be a \emph{standard kernel}, which means that it satisfies the inequalities
\begin{equation}\label{std1} |K(x,y)| \lesssim \frac{1}{|x - y|^{d}}, \end{equation} 
and
\begin{equation}\label{std2} |K(x,y) - K(x',y)| + |K(y,x) - K(y,x')| \lesssim \frac{|x - x'|^{\delta}}{|x - y|^{\delta + d}} \end{equation}
for some $\delta > 0$ and for all $x,x',y \in \R^{d}$ with $x \neq y$ and $|x - x'| < |x - y|/2$. In 2010, T. Hytönen \cite{Hyt} found a remarkably useful way of representing all Calder\'on-Zygmund operators as averages of much simpler objects termed \emph{Haar shifts}, often denoted by $\Sh$. Postponing precise definitions to the next section, the representation of a Calder\'on-Zygmund operator $T$ with parameter $\delta > 0$ has the following form:
\begin{equation}\label{HRep} T = \E_{\omega} \left[\sum_{m,n \in \Z_{+}} 2^{-(m + n)\delta/2} \Sh_{m,n}^{\omega}\right]. \end{equation} 
Representation theorems usually come bundled with a more or less trivial converse, stating, in brief, that the result cannot be further generalized: for example, this is the case with Riesz and Radon-Nikodym representation, since every regular Borel measure gives rise to a continuous linear functional on $C_{0}(X)$, and measurable functions generate absolutely continuous measures in an obvious fashion. The existence of such a converse to the Hytönen representation was not discussed in \cite{Hyt}, the most likely reason being that the point was not interesting so far as the immediate applications of the representation were concerned. In this note, we settle the issue: it turns out that the right hand side of \eqref{HRep} always defines a Calder\'on-Zygmund operator. The proof is not quite as straightforward as in the Riesz and Radon-Nikodym cases: this is due to the fact that the individual shifts $\Sh_{m,n}^{\omega}$ are \textbf{not} Calder\'on-Zygmund operators, which means that the averaging element $\E_{\omega}$ is essential for the argument.

\section{Dyadic Grids and the Haar Shift Representation}

We start by defining the Haar shifts, and also some basic concepts necessary to understand them. Our exposition follows closely the ones given in \cite{HLMORSU} and \cite{HPT}.
\begin{definition}[Dyadic grids]\label{grid} A \emph{dyadic grid} is any collection $\calD$ of cubes $Q \subset \R^{d}$ such that
\begin{itemize} 
\item[(i)] The cubes in $\calD^{k} = \{Q \in \calD : \ell(Q) = 2^{k}\}$ partition $\R^{d}$ for any $k \in \Z$;
\item[(ii)] Every cube $Q \in \calD^{k}$ is the disjoint union of $2^{d}$ cubes in $\calD^{k - 1}$. These cubes are called the \emph{children} of $Q$, and their collection is denoted by $ch(Q)$.
\end{itemize}
\end{definition} 

The \emph{standard dyadic grid} $\calD_{0}$ is an example of a dyadic grid in the above sense. By definition, the collection $\calD_{0}$ consists of all cubes $Q \subset \R^{d}$ having the form $Q = 2^{k}[n_{1},n_{1} + 1) \times \ldots \times [n_{d}, n_{d} + 1)$ for some integers $k \in \Z$ and $n_{1},\ldots,n_{d} \in \Z$. We also write $\calD_{0}^{k} = \{Q \in \calD_{0} : \ell(Q) = 2^{k}\}$. 
\begin{definition}[Haar functions] Let $\calD$ be a dyadic grid, and let $Q \in \calD$ be a cube. A \emph{Haar function associated with $Q$} is any function $h_{Q}$ of the form
\begin{displaymath} h_{Q} = \sum_{Q' \in ch(Q)} c_{Q'}\1_{Q'}, \qquad c_{Q'} \in \R. \end{displaymath} 
\end{definition}

\begin{remark} Note that our definition of Haar functions is slightly unconventional, as one often requires these functions to have vanishing integral in addition to the previous hypotheses. We have no need for this assumption, however, so we omit it to avoid an unnecessary reduction of generality in Theorem \ref{main}.
\end{remark}

\begin{definition}[Haar shift operators] Let $\calD$ be a dyadic grid, and let $m,n \in \Z_{+}$. An operator $\Sh$ defined on $L^{2}(\R^{d})$ is a \emph{Haar shift on $\calD$ of complexity type $(m,n)$}, if $\Sh$ has the form
\begin{displaymath} \Sh f(x) = \sum_{Q \in \calD} \frac{1}{|Q|} \mathop{\mathop{\sum_{Q',Q'' \in \calD,\: Q',Q'' \subset Q}}_{\ell(Q') = 2^{-m}\ell(Q)}}_{\ell(Q'') = 2^{-n}\ell(Q)} \langle f,h^{Q'}_{Q''}\rangle h^{Q''}_{Q'}(x), \end{displaymath}
where $h^{Q'}_{Q''}$ and $h_{Q'}^{Q''}$ are Haar functions on $Q''$ and $Q'$, respectively, satisfying
\begin{displaymath} \|h^{Q'}_{Q''}h_{Q'}^{Q''}\|_{\infty} \leq 1. \end{displaymath}  
\end{definition}

\begin{notation} We will save some vertical space by using the abbreviation
\begin{displaymath} \mathop{\mathop{\sum_{Q',Q'' \in \calD,\: Q',Q'' \subset Q}}_{\ell(Q') = 2^{-m}\ell(Q)}}_{\ell(Q'') = 2^{-n}\ell(Q)} =: \sum_{Q',Q'' \subset Q}^{(m,n)} \end{displaymath}
We should also mention that the notation $A \lesssim B$, already used in the introduction, means that $A \leq CB$, where the constant $C > 0$ depends only on $d$ and, occasionally, the parameter $\delta$.
\end{notation}

\begin{definition}[Random dyadic grids] Let $\Omega = (\{0,1\}^{d})^{\Z}$ be the space of all doubly infinite sequences of binary $d$-tuples. If $Q \in \calD_{0}^{k}$ and $\omega \in \Omega$, define
\begin{displaymath} Q + \omega := Q + \sum_{j < k} \omega_{j}2^{j}. \end{displaymath}
Then, set $\calD_{\omega}^{k} := \{Q + \omega : Q \in \calD_{0}^{k}\}$ and $\calD_{\omega} := \bigcup_{k \in \Z} \calD_{\omega}^{k}$. The collection $\calD_{\omega}$ is a dyadic grid in the sense of Definition \ref{grid}, for any choice of $\omega \in \Omega$. The product probability measure on $\Omega$ is denoted by $\tn$, and the symbol $\E_{\omega}$ will always refer to expectation with respect to $\tn$.
\end{definition}

Now we are prepared to state Hytönen's result on representing general Calder\'on-Zygmund operators in terms of Haar shifts:
\begin{thm}\label{HRepT} Let $T$ be a Calder\'on-Zygmund operator with parameter $\delta > 0$. Then, for every $\omega \in \Omega$ and $m,n \in \Z_{+}$, there exists a Haar shift $\Sh_{m,n}^{\omega}$ on $\calD_{\omega}$ of complexity type $(m,n)$ such that the representation 
\begin{displaymath} \langle Tf,g\rangle = C_{T}\E_{\omega} \sum_{m,n \in \Z_{+}} 2^{-(m + n)\delta/2} \langle \Sh_{m,n}^{\omega}f,g\rangle  \end{displaymath} 
holds for smooth functions $f,g \colon \R^{d} \to \R$ with compact support. Here $C_{T} \geq 0$ is a constant depending on $T$. Moreover, the shifts $\Sh_{m,n}^{\omega}$ satisfy $\|\Sh_{m,n}^{\omega}\|_{L^{2} \to L^{2}} \leq 1$ for all $m,n \in \Z_{+}$ and $\omega \in \Omega$.
\end{thm}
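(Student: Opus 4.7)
My plan is to follow the classical Nazarov--Treil--Volberg random Haar expansion, as refined by Hyt\"onen. Fix $\omega \in \Omega$ and let $\Delta_Q^\omega$ denote the $L^2$-projection onto the mean-zero Haar functions supported on $Q \in \calD_\omega$. Writing $f = \sum_Q \Delta_Q^\omega f$ and similarly for $g$ gives
\[\langle Tf, g \rangle = \sum_{Q, R \in \calD_\omega} \langle T \Delta_Q^\omega f, \Delta_R^\omega g\rangle,\]
and by symmetry it suffices to treat the sum restricted to $\ell(Q) \leq \ell(R)$. The smoothness and compact support of $f, g$, together with the $L^2$-boundedness of $T$, will justify absolute convergence and all rearrangements below.

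The crucial step is the good/bad decomposition. Call $Q \in \calD_\omega$ \emph{good} if, for some $\gamma = \gamma(\delta) \in (0,1)$, every strictly larger cube $R \in \calD_\omega$ with $R \not\supset Q$ satisfies $\dist(Q, \partial R) \geq \ell(Q)^\gamma \ell(R)^{1-\gamma}$. A direct product-measure computation shows that $\{Q \text{ is good}\}$ has some fixed probability $\pi_{\text{good}} > 0$ independent of $Q$, and is independent of certain coordinates of $\omega$ in a way that allows the insertion of $\pi_{\text{good}}^{-1}\1_{\{Q \text{ good}\}}$ inside $\E_\omega$ without altering the expected value of each summand. Consequently, in expectation only good pairs contribute.

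For each surviving pair $(Q,R)$ with $\ell(Q) = 2^{-m}\ell(\hat Q)$ and $\ell(R) = 2^{-n}\ell(\hat Q)$ for a common ancestor $\hat Q \in \calD_\omega$, the mean-zero property of $\Delta_Q^\omega f$ combined with \eqref{std1}--\eqref{std2}, applied at the good-cube distance $\dist(Q,\partial R) \geq \ell(Q)^\gamma \ell(R)^{1-\gamma}$, bounds the matrix coefficient by a factor of order $2^{-(m+n)\delta/2}$ times the natural product of Haar coefficient magnitudes, provided $\gamma$ is calibrated to $\delta$ and the kernel decay is split carefully between the two depth parameters. Grouping pairs by $(\hat Q, m, n)$ and summing the associated rank-one blocks assembles the contribution into a Haar shift $\Sh_{m,n}^\omega$ on $\calD_\omega$ of complexity $(m,n)$, with the Haar functions $h^{Q'}_{Q''}$ and $h_{Q'}^{Q''}$ read off from the normalized matrix entries.

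The main obstacle is simultaneously securing the pointwise normalization $\|h^{Q'}_{Q''} h_{Q'}^{Q''}\|_\infty \leq 1$ and the operator bound $\|\Sh_{m,n}^\omega\|_{L^2 \to L^2} \leq 1$ after all the rearrangements. This is delicate because the extracted $2^{-(m+n)\delta/2}$ has to be split judiciously between the two Haar factors of each rank-one block so that neither factor exceeds the required sup-norm budget, and one has to invoke orthogonality across distinct ancestors $\hat Q$ and, inside each $\hat Q$, a Cauchy--Schwarz type argument to assemble the rank-one blocks into a genuinely norm-$\leq 1$ shift. Any remaining overall numerical loss, together with the constant arising from the kernel bound \eqref{std1}, is absorbed into $C_T$.
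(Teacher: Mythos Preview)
The paper does not actually prove this theorem: its entire proof is the single line ``see \cite[Theorem 4.1]{HPT} or the original reference \cite[Theorem 4.2]{Hyt}.'' So there is nothing in the paper to compare your argument against; you are sketching the content of the cited references, not reproducing or replacing anything the present paper does.

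As a sketch of the Hyt\"onen proof your outline is on the right track---random dyadic grids, the good/bad decomposition, and the independence trick that lets one insert $\pi_{\text{good}}^{-1}\1_{\{Q\text{ good}\}}$ under $\E_\omega$ are indeed the core ingredients. However, if you intend this as a self-contained proof rather than a pointer, be aware that it glosses over a nontrivial case analysis: the pairs $(Q,R)$ must be split according to whether $Q$ and $R$ are nested or separated, and the nested case does \emph{not} reduce to the kernel smoothness estimate you describe. In that case one needs the $T1$ theorem input ($T1, T^{*}1 \in \mathrm{BMO}$, which is part of the Calder\'on--Zygmund hypothesis via $L^2$-boundedness) to control the paraproduct-type terms, and these are precisely the terms that force the appearance of non-cancellative Haar functions in the shifts---consistent with the paper's remark that its Haar functions are not assumed to have vanishing integral. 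Your sketch, built entirely on the cancellation of $\Delta_Q^\omega f$ against \eqref{std2}, does not account for this part of the decomposition.
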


\begin{proof} see \cite[Theorem 4.1]{HPT} or the original reference \cite[Theorem 4.2]{Hyt}. \end{proof}

\section{A Converse to Theorem \ref{HRepT}}

Suppose that $0 < \delta < 1$. Let for every $m,n \in \Z_{+}$ and $\omega \in \Omega$ be given a number $\lambda_{m,n}^{\omega} \in \R$ and a Haar shift operator $\Sh_{m,n}^{\omega}$ satisfying $|\lambda_{m,n}^{\omega}| \leq 2^{-(m + n)\delta}$ and $\|\Sh^{\omega}_{m,n}\|_{L^{2} \to L^{2}} \leq 1$. Then the formula
\begin{displaymath} \langle Sf,g\rangle := \E_{\omega} \sum_{m,n \in \Z_{+}} \lambda_{m,n}^{\omega} \langle\Sh_{m,n}^{\omega}f,g\rangle, \quad f,g \in L^{2}(\R^{d}), \end{displaymath}
certainly defines a bounded linear operator $S \colon L^{2}(\R^{d}) \to L^{2}(\R^{d})$. Moreover,
\begin{thm}\label{main} $S$ is a Calder\'on-Zygmund operator.
\end{thm}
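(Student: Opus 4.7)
The plan is to verify that $S$ admits an integral representation of the form \eqref{intRep} with a kernel $K$ satisfying the size and smoothness bounds \eqref{std1} and \eqref{std2}; $L^{2}$-boundedness of $S$ is already granted by the triangle inequality and $\sum_{m,n} 2^{-(m+n)\delta} < \infty$. Unwrapping the definition of a Haar shift, each $\Sh_{m,n}^{\omega}$ has an integral kernel
\begin{displaymath} K_{m,n}^{\omega}(x,y) = \sum_{Q \in \calD_{\omega}} \frac{1}{|Q|} \sum_{Q',Q'' \subset Q}^{(m,n)} h_{Q''}^{Q'}(x) h_{Q'}^{Q''}(y). \end{displaymath}
Since $h_{Q''}^{Q'}$ and $h_{Q'}^{Q''}$ are supported in $Q'$ and $Q''$, a term contributes only when $x \in Q'$ and $y \in Q''$, which forces $Q$ to contain both $x$ and $y$; for each such $Q$ exactly one pair $(Q',Q'')$ contributes, and $\|h_{Q''}^{Q'} h_{Q'}^{Q''}\|_{\infty} \leq 1$ bounds the contribution by $1/|Q|$. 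Summing the geometric series over dyadic scales $\ell(Q) \gtrsim |x-y|$ yields $|K_{m,n}^{\omega}(x,y)| \lesssim 1/|x-y|^{d}$, uniformly in $\omega,m,n$. I would then set $K(x,y) := \E_{\omega}\sum_{m,n} \lambda_{m,n}^{\omega} K_{m,n}^{\omega}(x,y)$; Fubini (justified by the uniform size bound and the geometric convergence of $\sum 2^{-(m+n)\delta}$) gives \eqref{intRep} for $f,g$ bounded with disjoint compact supports, and \eqref{std1} is immediate.

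The main obstacle is \eqref{std2}. Each individual kernel $K_{m,n}^{\omega}$ is piecewise constant on products of dyadic cubes and grossly violates \eqref{std2}, so the $\omega$-averaging must be used in an essential way. Fix $y$ and $x,x'$ with $|x-x'| < |x-y|/2$. A contribution from $Q$ to $K_{m,n}^{\omega}(x,y) - K_{m,n}^{\omega}(x',y)$ survives only when $x$ and $x'$ fail to lie in a common child of the depth-$m$ subcube of $Q$ containing them, i.e.\ only when they are separated by $\calD_{\omega}$ at scale $2^{-m-1}\ell(Q)$. Combining this with the standard random-grid estimate
\begin{displaymath} \tn\{\omega : x, x' \text{ lie in different cubes of } \calD_{\omega}^{k}\} \lesssim \frac{|x-x'|}{2^{k}}, \end{displaymath}
summing over $\ell(Q) = 2^{k} \gtrsim |x-y|$, and splitting at $2^{k} \sim 2^{m}|x-x'|$, I would obtain
\begin{displaymath} \E_{\omega}|K_{m,n}^{\omega}(x,y) - K_{m,n}^{\omega}(x',y)| \lesssim \min\!\left(\frac{2^{m}|x-x'|}{|x-y|^{d+1}},\, \frac{1}{|x-y|^{d}}\right). \end{displaymath}

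To finish, multiply by $|\lambda_{m,n}^{\omega}| \leq 2^{-(m+n)\delta}$ and sum. The sum over $n$ is a harmless geometric factor, and for the sum over $m$ I would split at $M := \lfloor \log_{2}(|x-y|/|x-x'|)\rfloor$: for $m \leq M$ the first branch of the minimum dominates and $\sum_{m \leq M} 2^{-m\delta}2^{m}$ sums geometrically to $\sim 2^{M(1-\delta)}$, where $\delta < 1$ enters critically; for $m > M$ the second branch dominates and $\sum_{m > M} 2^{-m\delta} \sim 2^{-M\delta}$. Both regimes produce a bound proportional to $|x-x'|^{\delta}/|x-y|^{d+\delta}$, which is \eqref{std2} in the $x$ variable. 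The estimate in the $y$ variable follows from the same argument with the roles of $m$ and $n$ (and $x$ and $y$) interchanged, completing the verification.
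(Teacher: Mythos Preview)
Your proposal is correct, and the overall plan matches the paper's: define $K$ as the weighted $\omega$-average of the shift kernels $K_{m,n}^{\omega}$, read off \eqref{std1} from the geometric sum over scales $\ell(Q)\gtrsim |x-y|$, and use the random-grid boundary estimate in an essential way for \eqref{std2}. Where you differ is in the organization of the smoothness estimate. The paper partitions $\Omega$ via Borel--Cantelli into sets $G_{k}$ (encoding how close $x$ lies to dyadic boundaries of various scales), shows that for $\omega\in G_{k}$ the sum \eqref{kernel} vanishes whenever $m<N-k-k_{d}$, and then balances the factor $2^{\delta k}$ from $\sum_{m\geq N-k}2^{-m\delta}$ against $\tn[G_{k}]\lesssim 2^{-k}$, the final sum $\sum_{k}2^{k(\delta-1)}$ converging because $\delta<1$. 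You instead fix $m,n$, push the expectation inside, and bound $\E_{\omega}|K_{m,n}^{\omega}(x,y)-K_{m,n}^{\omega}(x',y)|$ directly, using the separation probability $\tn\{x,x'\ \text{in different cubes of}\ \calD_{\omega}^{k}\}\lesssim |x-x'|/2^{k}$ at each scale. This yields the clean per-shift bound $\min\bigl(2^{m}|x-x'|/|x-y|^{d+1},\,|x-y|^{-d}\bigr)$, and the split at $m=M$ with $\sum_{m\le M}2^{m(1-\delta)}\sim 2^{M(1-\delta)}$ again uses $\delta<1$ in the same place. Your route is slightly more elementary (no Borel--Cantelli partition of $\Omega$) and makes the dependence on the complexity parameter $m$ more transparent; the paper's decomposition, on the other hand, yields a pointwise-in-$\omega$ statement about which complexities contribute, which can be useful in its own right. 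The underlying probabilistic lemma is the same in both arguments.
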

In order to prove the theorem, we need to carry out the following tasks:
\begin{itemize}
\item[(i)] Find a kernel $K \colon \R^{d} \times \R^{d} \setminus \{x = y\} \to \R$ so that $S$ is represented by $K$.
\item[(ii)] Prove that $K$ satisfies the estimates \eqref{std1} and \eqref{std2} with the exponent $\delta$ defined above.
\end{itemize} 
Guessing the correct kernel is easy. Recalling the definition of Haar shifts, $K$ can be nothing else but
\begin{displaymath} K(x,y) := \E_{\omega} \sum_{m,n \in \Z_{+}} \lambda_{m,n}^{\omega} \sum_{Q \in \calD_{\omega}} \frac{1}{|Q|} \sum_{Q',Q'' \subset Q}^{(m,n)} h^{Q''}_{Q'}(x)h^{Q'}_{Q''}(y). \end{displaymath} 
With this definition, $K(x,y)$ makes sense for $x \neq y$. Indeed, note that the functions $h_{Q'}^{Q''}$ and $h_{Q''}^{Q'}$ are both supported on $Q$. Now, if $d(Q) < |x - y|$, this implies that either $h^{Q''}_{Q'}(x) = 0$ or $h^{Q'}_{Q''}(y) = 0$ for all $Q',Q'' \subset Q$. Using this and the normalization $\|h_{Q''}^{Q'}h_{Q'}^{Q''}\|_{\infty} \leq 1$ we obtain
\begin{displaymath} \left| \sum_{Q \in \calD_{\omega}} \frac{1}{|Q|} \sum_{Q',Q'' \subset Q}^{(m,n)} h^{Q''}_{Q'}(x)h^{Q'}_{Q''}(y) \right| \leq \mathop{\sum_{x \in Q \in \calD_{\omega}}}_{d(Q) \geq |x - y|} \frac{1}{|Q|} \lesssim \frac{1}{|x - y|^{d}}. \end{displaymath} 
This not only shows that $K(x,y)$ makes sense for $x \neq y$, but also proves the (easier) estimate \eqref{std1} for $K$. 

Next, let us check that $S$ admits the integral representation \eqref{intRep} for bounded compactly supported functions $f$ and $g$ with disjoint supports. Since the supports have positive distance, say $\dist(\supp f,\supp g) =: \epsilon > 0$, we have
\begin{align*} & \left| \iint \sum_{m,n \in \Z_{+}} \lambda_{m,n}^{\omega} \sum_{Q \in \calD_{\omega}} \frac{1}{|Q|} \sum_{Q',Q'' \subset Q}^{(m,n)} h^{Q''}_{Q'}(x)h^{Q'}_{Q''}(y)f(y)g(x) \, dx dy \right|\\
& \leq \|fg\|_{\infty} \sum_{m,n \in \Z_{+}} |\lambda_{m,n}^{\omega}| \int_{\supp f} \int_{\supp g} \mathop{\sum_{x \in Q \in \calD_{\omega}}}_{d(Q) \geq \epsilon} \frac{1}{|Q|} \, dx dy \lesssim_{\delta,\epsilon,f,g} 1 \end{align*}
independently of $\omega \in \Omega$, and this allows us to use Fubini's theorem:
\begin{align*} \langle Sf,g \rangle & = \iint f(y)g(x) \E_{\omega} \left[ \sum_{m,n \in \Z_{+}} \lambda_{m,n}^{\omega} \sum_{Q \in \calD_{\omega}} \frac{1}{|Q|} \sum_{Q',Q'' \subset Q}^{(m,n)} h^{Q''}_{Q'}(x)h^{Q'}_{Q''}(y) \right] dx dy\\
& = \iint f(y)g(x)K(x,y) \, dx dy.   \end{align*}  

\subsection{H\"older continuity of $K$}

It remains to prove the estimate \eqref{std2} for $K$. By symmetry, it suffices to establish
\begin{displaymath} |K(x,y) - K(x',y)| \lesssim \frac{|x - x'|^{\delta}}{|x - y|^{\delta + d}}  \end{displaymath} 
for $x,x',y \in \R^{d}$ with $x \neq y$ and $|x - x'| < |x - y|/2$. To this end, we need 
\begin{lemma} Fix $k \in \Z$ and $x \in \R^{d}$. Then 
\begin{displaymath} \tn[\{\omega : x \in Q \in \calD_{\omega}^{k} \text{ and } d(x,\partial Q) \leq \tau \ell(Q)\}] \lesssim \tau, \qquad \tau \geq 0. \end{displaymath} 
\end{lemma}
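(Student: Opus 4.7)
The plan is to reduce the statement to a direct geometric observation about a single cube by exploiting the translation structure of the random grids.

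First, I would observe that the grid $\calD_\omega^k$ depends only on the coordinates $\omega_j$ with $j < k$: explicitly, $\calD_\omega^k = \calD_0^k + v(\omega)$, where
\begin{displaymath} v(\omega) = \sum_{j < k} \omega_j 2^j \in \R^d. \end{displaymath}
Since the cubes of $\calD_0^k$ have side length $2^k$, only $v(\omega)$ modulo $2^k$ in each coordinate affects the grid $\calD_\omega^k$. The key distributional fact is that, for each coordinate $i \in \{1,\ldots,d\}$, the number $v(\omega)_i = \sum_{j<k} \omega_j^{(i)} 2^j$ is, under $\tn$, a sum of independent $\{0,1\}$-valued uniform digits weighted by the binary scales $2^j$ with $j < k$. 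This is nothing but the binary expansion of a uniform random variable on $[0, 2^k)$; hence $v(\omega)$ is uniformly distributed on $[0, 2^k)^d$, and therefore $x - v(\omega) \pmod{2^k \Z^d}$ is uniform on $[0,2^k)^d$ by translation invariance.

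Next, I would unwind what this means for the cube $Q \in \calD_\omega^k$ containing $x$. The location of $x$ within $Q$ equals, up to translation, the location of $x - v(\omega)$ within the unique cube of $\calD_0^k$ containing it. By the previous paragraph, this location is uniformly distributed on the cube $[0,2^k)^d$. In other words, conditionally on the event $\{x \in Q\}$ (which has probability one for exactly one $Q$), the relative position of $x$ inside $Q$ is uniform on $Q$.

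Finally, I would estimate the probability in question directly: for a point $y$ uniform on a cube of side $2^k$, one has $d(y,\partial Q) \leq \tau 2^k$ if and only if at least one coordinate of $y$ lies within $\tau 2^k$ of an endpoint of the corresponding edge. A union bound over the $d$ coordinates gives
\begin{displaymath} \tn\bigl[\{\omega : x \in Q \in \calD_\omega^k \text{ and } d(x,\partial Q) \leq \tau \ell(Q)\}\bigr] \leq 2d\tau, \end{displaymath}
for $0 \leq \tau \leq 1/2$ (the case $\tau > 1/2$ being trivial), which is the desired $\lesssim \tau$. The only mildly delicate point is the identification of the distribution of $v(\omega) \bmod 2^k$ as uniform; everything else is a routine union bound.
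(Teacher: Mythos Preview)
Your proof is correct and follows essentially the same approach as the paper's: both identify the shift $v(\omega)=\sum_{j<k}\omega_j 2^j$ as uniformly distributed on $[0,2^k)^d$ and reduce the question to the Lebesgue measure of the $\tau$-neighborhood of the boundary inside a unit cube. The only cosmetic difference is that the paper computes this measure exactly as $1-(1-2\tau)^d\lesssim\tau$, whereas you obtain the bound $2d\tau$ via a coordinate-wise union bound.
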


\begin{proof} By definition, the cubes in $\calD_{\omega}^{k}$ have the form $Q + \sum_{j < k} \omega_{j}2^{j}$, where $Q \in \calD_{0}^{k}$ and $\omega \in (\{0,1\}^{d})^{\Z}$. The sums $\omega = \sum_{j < k} \omega_{j}2^{j}$ are uniformly distributed in $[0,2^{k}]^{d}$, so the probability in question is simply\begin{displaymath} 2^{-kd} \cdot |\{y \in [0,2^{k}]^{d} : x \in Q + y, \: Q \in \calD_{0}^{k} \text{ and } d(x,\partial(Q + y)) \leq \tau\ell(Q)\}|. \end{displaymath} 
A simple geometric argument shows that this probability is the same for every $x \in \R^{d}$ and equals 
\begin{displaymath} |\{y \in [0,1]^{d} : d(y,\partial [0,1]^{d}) \leq \tau\}| = 1 - |[\tau,(1 - \tau)]^{d}| = 1 - (1 - 2\tau)^{d} \lesssim \tau. \end{displaymath}
\end{proof}

\begin{proof}[Proof of (\ref{std2})] Fix $x,x',y \in \R^{d}$ with $|x - x'| < |x - y|/2$. Choose $N \in \N$ so that $2^{-(N + 1)} < |x - x'|/|x - y| \leq 2^{-N}$. Write $E_{0} := \R^{d}$, and for $k \in \N \setminus \{0\}$ define
\begin{displaymath} E_{k} = \{\omega : x \in Q \in \calD_{\omega}^{i(k)} \text{ and } d(x,\partial Q) < 2|x - x'|\}, \end{displaymath}
where $i(k) \in \N$ is the smallest index such that $\ell(Q) \geq 2^{k}|x - x'|$ for $Q \in \calD_{\omega}^{i(k)}$. Since $d(x,\partial Q) < 2|x - x'| \leq 2^{-k + 1}\ell(Q)$ for $\omega \in E_{k}$ and $x \in Q \in \calD_{\omega}^{i(k)}$, the previous lemma implies that $\tn[E_{k}] \lesssim 2^{-k}$. Then define
\begin{displaymath} G_{k} = E_{k} \setminus \bigcup_{j = k + 1}^{\infty} E_{j}, \qquad k \geq 0. \end{displaymath}
Since $\sum \tn[E_{k}] < \infty$, we have $\tn[\limsup_{k \to \infty} E_{k}] = 0$, which means that $\tn$ almost every $\omega$ belongs to only finitely many of the sets $E_{k}$: thus $\tn$ almost every $\omega$ lies in $G_{k}$ for some $k \geq 0$ (recall that $E_{0} = \R^{d}$). Also note that $\tn[G_{k}] \lesssim 2^{-k}$. Now we decompose the expectation $\E_{\omega}$ in the definition of $K(x,y)$ using the sets $G_{k}$:
\begin{align} |K(x,y) & - K(x',y)|\notag\\
& = \left|\E_{\omega}\left[ \sum_{(m,n) \in \Z^{2}_{+}} \lambda_{m,n}^{\omega}\sum_{Q \in \calD_{\omega}} \frac{1}{|Q|}\sum_{Q',Q'' \subset Q}^{(m,n)} (h_{Q'}^{Q''}(x) - h_{Q'}^{Q''}(x'))h_{Q''}^{Q'}(y)  \right]\right|\notag\\
&\label{form10} \leq \sum_{k \in \N} \int_{G_{k}} \sum_{(m,n) \in \Z^{2}_{+}} |\lambda_{m,n}^{\omega}| \left|\sum_{Q \in \calD_{\omega}} \frac{1}{|Q|}\sum_{Q',Q'' \subset Q}^{(m,n)} (h_{Q'}^{Q''}(x) - h_{Q'}^{Q''}(x'))h_{Q''}^{Q'}(y) \right| \, d\tn[\omega].  \end{align} 
Fixing $k \in \N$, we now plan to estimate the $\tn$-integral over the set $G_{k}$. Let 
\begin{displaymath} m < N - k - k_{d}, \end{displaymath}
where $k_{d} \in \N$, to be chosen in a moment, is an integer depending only on the dimension $d$. Fix $\omega \in G_{k}$. We claim that
\begin{equation}\label{kernel} \sum_{Q',Q'' \subset Q}^{(m,n)} (h_{Q'}^{Q''}(x) - h_{Q'}^{Q''}(x'))h_{Q''}^{Q'}(y) = 0 \end{equation} 
for any $Q \in \calD_{\omega}$. Assume the contrary, and let $Q \in \calD_{\omega}$ such that \eqref{kernel} fails. Then $Q$ contains $y$ and at least one of the points $x,x'$. Since $|x - x'| < |x - y|/2$, this implies $\ell(Q) \gtrsim |x - y|$. Let $R \in \calD_{\omega}$ be the cube of side-length $\ell(R) = 2^{-m - 1}\ell(Q)$, which contains $x$. Then, as $m + 1 \leq N - k - k_{d}$, we have
\begin{displaymath} \ell(R) = 2^{-m - 1}\ell(Q) \geq 2^{k + k_{d} - N}\ell(Q) \gtrsim 2^{k_{d}} 2^{k - N}|x - y|. \end{displaymath} 
Now we fix $k_{d} \in \N$ so large that $\ell(R) \geq 2^{k + 1 - N}|x - y|$ above, and then
\begin{displaymath} \ell(R) \geq 2^{k + 1 - N}|x - y| \geq 2^{k + 1}|x - x'| \end{displaymath}
by the choice of $N \in \N$. Next, let $\tilde{Q}$ be the cube in $\calD^{i(k + 1)}_{\omega}$ that contains $x$. By definition of $\calD_{\omega}^{i(k + 1)}$, and the estimate for $\ell(R)$ above, we have $\ell(R) \geq \ell(\tilde{Q})$, and so $\tilde{Q} \subset R$. Since $\omega \notin E_{k + 1}$, we may infer that $d(x,\partial R) \geq d(x,\partial \tilde{Q}) \geq 2|x - x'|$. It follows that $x' \in R$. Now it suffices to note that the functions $h_{Q'}^{Q''}$ appearing in (\ref{kernel}) are constant on all cubes $R' \in \calD_{\omega}$ of side-length $\ell(R') = 2^{-m - 1}\ell(Q)$, including $R$, whence $h_{Q'}^{Q''}(x) = h_{Q'}^{Q''}(x')$  no matter how we choose $Q',Q'' \subset Q$. This proves \eqref{kernel}. Thus, for $\omega \in G_{k}$,
\begin{align*} \sum_{(m,n) \in \Z^{2}_{+}} & |\lambda_{m,n}^{\omega}| \left|\sum_{Q \in \calD_{\omega}} \frac{1}{|Q|}\sum_{Q',Q'' \subset Q}^{(m,n)} (h_{Q'}^{Q''}(x) - h_{Q'}^{Q''}(x'))h_{Q''}^{Q'}(y) \right|\\
& \leq \sum_{m \geq N - k - k_{d}} 2^{-m\delta} \sum_{n \in \Z_{+}} 2^{-n\delta} \left| \sum_{Q \in \calD_{\omega}} \frac{1}{|Q|}\sum_{Q',Q'' \subset Q}^{(m,n)} (h_{Q'}^{Q''}(x) - h_{Q'}^{Q''}(x'))h_{Q''}^{Q'}(y) \right|. \end{align*} 
As we saw while proving (\ref{std1}), the expression inside the absolute values can be bounded by a dimensional constant times $\max\{|x - y|^{-d},|x' - y|^{-d}\} \lesssim |x - y|^{-d}$. The summation over $n \in \Z_{+}$ only contributes a multiplicative constant depending on $\delta$, so we may continue the previous estimate by
\begin{displaymath} \ldots \lesssim \frac{1}{|x - y|^{d}} \sum_{m = N - k - k_{d}} 2^{-m\delta} \lesssim 2^{\delta k} \frac{2^{-N\delta}}{|x - y|^{d}} \lesssim 2^{\delta k} \frac{|x - x'|^{\delta}}{|x - y|^{\delta + d}}. \end{displaymath}
Substituting this estimate into \eqref{form10} and using $\tn[G_{k}] \lesssim 2^{-k}$, we obtain
\begin{displaymath} \sum_{k \in \N} \int_{G_{k}} \cdots \, d\tn[\omega] \lesssim \frac{|x - x'|^{\delta}}{|x - y|^{\delta + d}} \sum_{k \in \N}^{\infty} 2^{k(\delta - 1)} \lesssim \frac{|x - x'|^{\delta}}{|x - y|^{\delta + d}}, \end{displaymath} 
as claimed.
\end{proof}

This completes the proof of Theorem \ref{main}.

\end{document}